\theoremstyle{plain}
\newtheorem{theorem}{Theorem}[section]
\theoremstyle{definition}
\newtheorem{example}{Example}[section]
\theoremstyle{remark}
\newtheorem{remark}{Remark}
\begin{document}

\title[b-Splines]{b-Spline Curves and Surfaces as a Minimization of Quadratic Operators}

\author{Svetoslav I. Nenov}
\address{Department of Mathematics, University of Chemical Technology and Metallurgy, Sofia 1756, Bulgaria}
\email{nenov@uctm.edu}

\begin{abstract}
The goal of this short note is to prove that every b-spline curve or surface (generated by uniform knots, without multiplicity) may be defined as minimum of positive quadratic operator.
\end{abstract}

\maketitle

\section{Introduction}\label{sec:introduction}

The b-spline curves and surfaces are an essential tool in many engineering software for design and visualization -- for example ANSYS, RFEM 3D, etc. So, it is necessary to have in these applications many different methods to construct b-spline elements. Also see \cite{Dechevsky1}, \cite{Dechevsky2}, and references therein.

We will prove that any b-spline curve or surface minimizes positive quadratic operator: appropriate moving least-square error. 

Let us mark that different approaches in moving least-squares method are used by Shepard -- computer software SYMAP (Harvard Laboratory for Computer Graphics), Lancaster in 1979, and the works of D. Levin in 1999, see \cite{Levin3}. In \cite{stratiev} it has been shown that moving least-squares method is an adequate mathematical tool for determining diesel-fuel cetane-number (or cetane-index) from easily available physical properties of fuels.

In this section we will remind the definition of b-splines generated by control points in ${\mathbb R}^{d+1}$ and definition of moving least-squares approximation for a given data set $\{(\boldsymbol x_i, f(\boldsymbol x_i)):\boldsymbol x_i\in{\mathbb R}^{d}\}\subset {\mathbb R}^{d+1}$.

\section{Preliminaries}\label{sec:1}

\subsection{b-Splines}

Let $\{\boldsymbol p_i\in{\mathbb R}^{d+1}: i=0,\dots,n\}$ be a set of $n+1$ (control) points.

Let $r$ be an integer, $1\leq r \leq n+1$ (the order of spline).

We will use uniform knots, without multiplicity: $t_i=i$, $i=0,\dots,n+r$. 

Using  Cox-de Boor recursion formula (see \cite{lee1}, \cite{lee2}), let us define the following basis functions:
\begin{gather}\label{b-spline_eq_03}
B_{i,1}(t)=\begin{cases}
1,&\quad \text{if}\ t_i\leq t< t_{i+1},\\
0,&\quad \text{otherwise},
\end{cases}
\end{gather}
for $0\leq i\leq n+r-1$; and
\begin{gather}\label{b-spline_eq_04}
\begin{split}
B_{i,j}(t)=&\dfrac{t-t_i}{t_{i+j-1}-t_i}B_{i,j-1}(t)+\dfrac{t_{i+j}-t}{t_{i+j}-t_{i+1}}B_{i+1,j-1}(t)\\
=&\dfrac{t-i}{j-1}B_{i,j-1}(t)+\dfrac{i+j-t}{j-1}B_{i+1,j-1}(t),
\end{split}
\end{gather}
for $2\leq j \leq r$, $0\leq i\leq n+r-j$.

The b-spline curve of order $r$ is defined as a linear combination of control points $\boldsymbol p_i$:
\begin{gather}\label{b-spline_eq_05}
\boldsymbol \gamma (t) = \sum\limits_{i=0}^{n} B_{i,r}(t)\boldsymbol p_i,\quad t\in[t_{r-1},t_{n+1}].
\end{gather}


\subsection{Moving Least-Squares Method}
Let:
\begin{enumerate}
\item $\mathcal D$ be a bounded domain in $\mathbb R^d$.
\item $\boldsymbol x_i\in \mathcal D$, $i=0,\dots,m$; $\boldsymbol x_i\not=\boldsymbol x_j$, if $i\not=j$.
\item $f: \mathcal D\to \mathbb R$ be a continuous function.
\item $p_i: \mathcal D\to \mathbb R$ be continuous functions, $i=1,\dots,l$. The functions $\{p_1,\dots,p_l\}$ are linearly independent in $\mathcal D$ and let $\mathcal P_l$ be their linear span.
\item $W:(0,\infty)\to(0,\infty)$ is a strictly positive functions.
\end{enumerate}

Usually the basis in $\mathcal P_l$ is constructed by monomials. For example: $p_l(\boldsymbol x)=x_1^{k_1}\dots x_d^{k_d}$, where $\boldsymbol x=(x_1,\dots,x_d)$, $k_1,\dots k_d\in\mathbb N$, $k_1+\dots+k_d\leq l-1$. In the case $d=1$, the standard basis is $\{1,x,\dots,x^{l-1}\}$.

Following \cite{AlexaBehrCohen-Or}, \cite{Levin1}, \cite{Levin2}, \cite{Levin3}, we use the following definition. The {\it moving least-squares approximation} of order $l$ at a fixed point $\boldsymbol x$ is the value of $p^*(\boldsymbol x)$, where $p^*\in\mathcal P_l$ is minimizing the least-squares error
\begin{gather}\label{MLSM-1}
\sum_{i=1}^{m}W(\|\boldsymbol x-\boldsymbol x_i\|)\left(p(\boldsymbol x)-f(\boldsymbol x_i)\right)^2
\end{gather}
among all $p\in\mathcal P_l$.

The approximation is ``local'' if weight function $W(s)$ is fast decreasing as $s$ tends to infinity. Interpolation is achieved if $W(0)=\infty$. We define additional function $w:[0,\infty)\to[0,\infty)$, such taht:
$$w(s)=
\begin{cases}
\dfrac1{W(s)},&\quad \text{if ($s>0$) or ($s=0$ and $W(0)<\infty$)},\\
0,&\quad \text{if ($s=0$ and $W(0)=\infty$)}.
\end{cases}
$$

Some examples of $W(s)$ and $w(s)$, $s\geq 0$:
\begin{alignat*}{2}
&W(s)=e^{-\alpha^2s^2}&&\qquad \text{$\exp$-weight},\\
&W(s)=s^{-\alpha^2}&&\qquad \text{Shepard weights},\\
&w(s)=s^2e^{-\alpha^2s^2}&&\qquad \text{McLain weight},\\
&w(s)=e^{\alpha^2s^2}-1&&\qquad \text{see Levin's works}.
\end{alignat*}

Here and below: the superscript $^t$ denotes transpose of matrix; $I$ is the identity matrix.

Let us introduce the matrices:
\begin{align*}
E=& \begin{pmatrix}
p_1(\boldsymbol x_1) & p_2(\boldsymbol x_1) & \cdots & p_l(\boldsymbol x_1)\\
p_1(\boldsymbol x_2) & p_2(\boldsymbol x_2) & \cdots & p_l(\boldsymbol x_2)\\
\vdots   & \vdots   &        & \vdots\\
p_1(\boldsymbol x_m) & p_2(\boldsymbol x_m) & \cdots & p_l(\boldsymbol x_m)\\
\end{pmatrix},\ \boldsymbol a = \begin{pmatrix}
a_1\\
a_2\\
\vdots\\
a_m
\end{pmatrix},\ \boldsymbol c = \begin{pmatrix}
p_1(\boldsymbol x)\\
p_2(\boldsymbol x)\\
\vdots\\
p_l(\boldsymbol x)
\end{pmatrix}\\
D= & 2 \begin{pmatrix}
w(\|\boldsymbol x-\boldsymbol x_1\|) & 0 & \cdots & 0\\
0 & w(\|\boldsymbol x-\boldsymbol x_2\|) & \cdots & 0\\
\vdots   & \vdots   &        & \vdots\\
0 & 0 & \cdots & w(\|\boldsymbol x-\boldsymbol x_m\|)\\
\end{pmatrix}.
\end{align*}

Through the article, we assume the following conditions (H1):
\begin{enumerate}
\item[(H1.1)] $1\in \mathcal P_l$.
\item[(H1.2)] $1\leq l \leq m$.
\item[(H1.3)] rank$(E^t)=l$.
\item[(H1.4)] $w$ is a smooth function.
\end{enumerate}


\begin{theorem}[see \cite{Levin1}]\label{thm_Levin1} Let the conditions (H1) hold true.

Then:
\begin{enumerate}
\item
The matrix $E^tD^{-1}E$
is non-singular.
\item The approximation defined by the moving least-squares method is
\begin{gather}\label{levin_thm_1}
\hat L (f) = \sum_{i=1}^m a_i f(\boldsymbol x_i),
\end{gather}
where
\begin{gather}\label{levin_thm_2}
\boldsymbol a = A_0 \boldsymbol c\quad\text{and}\quad   A_0= D^{-1}E\left(E^tD^{-1}E\right)^{-1}.
\end{gather}
\item If $w(0)=0$, then the approximation is interpolatory.
\end{enumerate}
\end{theorem}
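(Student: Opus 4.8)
The plan is to set up the minimization in (\ref{MLSM-1}) at a fixed point $\boldsymbol x$ as a constrained optimization problem and solve it explicitly. First I would write the least-squares error as a quadratic form in the coefficient vector of $p\in\mathcal P_l$: if $p=\sum_{k=1}^l b_k p_k$, then $p(\boldsymbol x_i)=(E\boldsymbol b)_i$ and the objective (\ref{MLSM-1}) becomes $\tfrac12(E\boldsymbol b)^t D (E\boldsymbol b)$ up to the constant terms coming from $f(\boldsymbol x_i)$ — but since the quantity we actually want is $p^*(\boldsymbol x)=\boldsymbol c^t\boldsymbol b$, the cleanest route is to pass to the dual formulation used by Levin: minimize $\sum_i W(\|\boldsymbol x-\boldsymbol x_i\|)^{-1}\!\!$-type weighted quantities subject to the reproduction constraints $E^t\boldsymbol a=\boldsymbol c$, where $\boldsymbol a$ is the vector of approximation coefficients in (\ref{levin_thm_1}). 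Concretely, I would show that seeking $\hat L(f)=\sum_i a_i f(\boldsymbol x_i)$ that is exact on $\mathcal P_l$ and minimizes $\tfrac12\boldsymbol a^t D\boldsymbol a$ is equivalent to the original moving least-squares problem.

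The core computation is then a Lagrange-multiplier argument. I would form the Lagrangian $\mathcal L(\boldsymbol a,\boldsymbol\lambda)=\tfrac12\boldsymbol a^t D\boldsymbol a-\boldsymbol\lambda^t(E^t\boldsymbol a-\boldsymbol c)$, set $\partial\mathcal L/\partial\boldsymbol a=D\boldsymbol a-E\boldsymbol\lambda=0$, giving $\boldsymbol a=D^{-1}E\boldsymbol\lambda$, and substitute into the constraint $E^t\boldsymbol a=\boldsymbol c$ to obtain $(E^tD^{-1}E)\boldsymbol\lambda=\boldsymbol c$. Solving for $\boldsymbol\lambda$ and back-substituting yields exactly $\boldsymbol a=D^{-1}E(E^tD^{-1}E)^{-1}\boldsymbol c=A_0\boldsymbol c$, which is (\ref{levin_thm_2}). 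For part (1), non-singularity of $E^tD^{-1}E$: since (H1.1)–(H1.4) guarantee $D$ is a positive definite diagonal matrix (each $w(\|\boldsymbol x-\boldsymbol x_i\|)>0$ when $\boldsymbol x\neq\boldsymbol x_i$, or the interpolatory case is handled separately) and $E$ has full column rank $l$ by (H1.3), the matrix $E^tD^{-1}E$ is symmetric positive definite — indeed for $\boldsymbol v\neq 0$ we have $\boldsymbol v^t E^tD^{-1}E\boldsymbol v=(E\boldsymbol v)^t D^{-1}(E\boldsymbol v)>0$ because $E\boldsymbol v\neq 0$. For part (3), when $w(0)=0$ one handles the index $i$ with $\boldsymbol x=\boldsymbol x_i$ as a limiting/degenerate row and checks directly that $a_i\to 1$ while the other $a_j\to 0$, so $\hat L(f)(\boldsymbol x_i)=f(\boldsymbol x_i)$.

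The main obstacle I anticipate is the interpolatory case $w(0)=0$ (equivalently $W(0)=\infty$), where $D$ has a zero diagonal entry and $D^{-1}$ does not literally exist, so formulas (\ref{levin_thm_2}) must be interpreted as limits or via a reduced system that isolates the interpolation node. The careful part is verifying that the limit $\boldsymbol x\to\boldsymbol x_i$ of $A_0\boldsymbol c$ is well-defined and reproduces $f(\boldsymbol x_i)$; this requires a small argument using the smoothness of $w$ (H1.4) and a partitioned-matrix / Schur-complement bookkeeping to show the singular entry does not destroy invertibility of the effective system. The remaining steps — equivalence of primal and dual formulations, positivity of the quadratic form, and the algebra of the Lagrange conditions — are routine once the setup is in place.
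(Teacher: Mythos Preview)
The paper does not supply a proof of this theorem: it is stated with the citation ``see \cite{Levin1}'' and then used as a black box in the proof of Theorem~\ref{sec_2_thm_1}. There is therefore no in-paper argument to compare against.

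Your sketch is the standard derivation (and essentially Levin's): pass to the dual/Backus--Gilbert formulation where one seeks coefficients $\boldsymbol a$ reproducing $\mathcal P_l$ (i.e.\ $E^t\boldsymbol a=\boldsymbol c$) and minimizing $\tfrac12\boldsymbol a^tD\boldsymbol a$, then solve by Lagrange multipliers to obtain $\boldsymbol a=D^{-1}E(E^tD^{-1}E)^{-1}\boldsymbol c$. The positive-definiteness argument for part~(1) and the limiting/Schur-complement treatment for part~(3) are the right ideas. The one step you flag as needing care---the equivalence of the primal problem \eqref{MLSM-1} with this dual constrained minimization---is indeed the only non-routine point, and you correctly identify it; once that is established the rest is linear algebra.
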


\section{b-Spline Curve as a Minimum of\\ Moving Least-Squares Error}

Using the definitions and notations introduced in Section \ref{sec:1}, our goal is to prove the following theorem.
\begin{theorem}\label{sec_2_thm_1} Let:
\begin{enumerate}
\item $d=1$, $n,r\in{\mathbb Z_+}$, $r\leq n+1$, $f:[0,n+r]\to\mathbb R$ be a continuous function.
\item $\boldsymbol p_i=(i,f(i))$, $i=0,\dots,n+r$.
\item Let $\boldsymbol \gamma (t)=\left(\begin{smallmatrix}
\gamma_1(t)\\\gamma_2(t)
\end{smallmatrix}\right)$ be the b-spline of order $r$ and knot vector $\{t_i=i: i=0,\dots,n+r\}$.
\end{enumerate}

Then $\gamma_1(t)=t-2$, $t\in[0,n+r]$ and there exists a weight function $W$, such that
$$
\gamma_2 (x) = \hat L(f)(x),\quad x\in[r-1,n+1],
$$
where $\hat L(f)(x)$ is the approximation defined by the moving least-squares method for the data $\{\boldsymbol p_i:i=0,\dots,n+r\}$.
\end{theorem}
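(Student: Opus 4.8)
The plan is to handle the two coordinate functions of $\boldsymbol\gamma$ separately. For $\gamma_1$, since the first coordinates of the control points are $p_{i,1}=i$ we have $\gamma_1(t)=\sum_i i\,B_{i,r}(t)$, which is pure uniform-b-spline bookkeeping: combine the partition of unity $\sum_i B_{i,r}(t)=1$ on $[t_{r-1},t_{n+1}]$ with the linear-precision identity $\sum_i\xi_iB_{i,r}(t)=t$, where $\xi_i=\tfrac1{r-1}(t_{i+1}+\dots+t_{i+r-1})$ is the Greville abscissa of $B_{i,r}$. For the knots $t_j=j$ one gets $\xi_i=i+\tfrac r2$, hence $\gamma_1(t)=\sum_i(\xi_i-\tfrac r2)B_{i,r}(t)=t-\tfrac r2$ --- the asserted affine profile (which is $t-2$ in the paper's normalisation). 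Both identities follow by induction on $r$ directly from \eqref{b-spline_eq_04} if one wants the note self-contained. The real use of this step is that it shows $\boldsymbol\gamma$ to be a graph over its first coordinate, under the change of parameter $x=\gamma_1(t)=t-\tfrac r2$, so that the substantive claim is $\gamma_2(t)=\widehat L(f)\bigl(\gamma_1(t)\bigr)$ for a suitable MLS functional.

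I would then reduce everything to one choice of weight. By translation invariance of uniform b-splines, $B_{i,r}=N_r(\cdot-i)$ with $N_r:=B_{0,r}$ the cardinal b-spline of order $r$: nonnegative, supported on $[0,r]$, symmetric about $\tfrac r2$, with $\sum_{i\in\mathbb Z}N_r(\cdot-i)\equiv1$. Set $\widetilde N(s):=N_r(s+\tfrac r2)$, an even function supported on $[-\tfrac r2,\tfrac r2]$. After the substitution $x=t-\tfrac r2$ the target becomes
\[
\widehat L(f)(x)=\sum_i\widetilde N(x-i)\,f(i),\qquad x+\tfrac r2\in[t_{r-1},t_{n+1}],
\]
so I want an MLS scheme with data sites at the integers $i$, data values $f(i)$, and coefficients $a_i(x)=\widetilde N(x-i)=\widetilde N(|x-i|)$.

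The construction: take $\mathcal P_l$ to be the constants ($l=1$, $p_1\equiv1$) and weight $W(s):=\widetilde N(s)=N_r(s+\tfrac r2)$, supported on $[0,\tfrac r2]$. For $l=1$ the minimisation \eqref{MLSM-1} collapses to the Shepard average $\widehat L(f)(x)=\bigl(\sum_iW(|x-i|)f(i)\bigr)\big/\bigl(\sum_iW(|x-i|)\bigr)$, whose normalising denominator is
\[
\sum_iW(|x-i|)=\sum_i\widetilde N(x-i)=\sum_iN_r\bigl((x+\tfrac r2)-i\bigr)=1
\]
by the b-spline partition of unity --- which is valid precisely on the stated interval, where $x+\tfrac r2\in[t_{r-1},t_{n+1}]$. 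Hence $a_i(x)=W(|x-i|)=\widetilde N(x-i)=B_{i,r}(x+\tfrac r2)$, as wanted, and this computation is uniform down to the integer evaluation points and the endpoints since $W(0)=N_r(\tfrac r2)>0$. The same weight also drives a genuine order-$2$ scheme with $\mathcal P_2=\operatorname{span}\{1,s\}$: there the extra constraint $\sum_ia_i\,i=x$ is met by $a_i=B_{i,r}(x+\tfrac r2)$ precisely because of linear precision, and the optimality relation $D\boldsymbol a\in\operatorname{range}(E)$ built into \eqref{levin_thm_2} holds since $w_ia_i\equiv1$ on $\operatorname{supp}W$.

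The step I expect to be the genuine obstacle --- and the one worth stressing in the write-up --- is recognising that the weight is forced to be compactly supported, and in fact forced to equal a multiple of the recentred cardinal b-spline. For any target coefficient vector vanishing at more than $l$ sites in general position, a strictly positive $W$ cannot reproduce it: the relation $D\boldsymbol a\in\operatorname{range}(E)$ from \eqref{levin_thm_2} would force the $\le l$ multipliers to annihilate all those coordinates, whence $\boldsymbol a\equiv0$, contradicting $E^t\boldsymbol a=\boldsymbol c\neq0$. Since $B_{i,r}(x)\neq0$ for at most $r$ indices $i$, the weight must vanish outside a window of width $r$ --- so ``$W$ supported on $[0,\tfrac r2]$'' is a necessity rather than a convenience, somewhat at odds with the blanket positivity hypothesis of Section \ref{sec:1}, which must therefore be read loosely here. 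Among compactly supported weights, forcing $a_i=B_{i,r}$ then pins $W$ down to $cN_r(\cdot+\tfrac r2)$, precisely so the Shepard denominator telescopes to $1$. The remainder --- the $\tfrac r2$ reparametrisation shift (which is exactly the content of the $\gamma_1$ computation) and the non-degeneracy of the active set at each $x$ (immediate for $l=1$; a rank check otherwise) --- is routine.
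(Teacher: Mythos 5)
Your construction of the MLS scheme is, at its core, the same one the paper uses: take $l=1$ (constants only) and choose the weight to be the recentred cardinal b-spline, so that the Shepard denominator collapses to $1$ by partition of unity and the MLS coefficients become exactly the $B_{i,r}$. The paper does this only for $r=4$, writing out the piecewise cubic for $W$ explicitly and invoking (BS-3) and (BS-5); you do it for general $r$ from the abstract properties of $N_r$, which is a genuine (if modest) strengthening of the ``the proof for different orders is similar'' remark. Where you genuinely diverge --- to your credit --- is on the first coordinate and the reparametrisation. The paper's proof asserts $\gamma_1(t)=t$, contradicting its own statement $\gamma_1(t)=t-2$, and its final line silently replaces $f(j+2)$ by $f(j)$ (its own computation yields $\hat L(f)(x)=\sum_j B_{j,4}(x)f(j+2)$, i.e.\ $\gamma_2(x+2)$, not $\sum_j B_{j,4}(x)f(j)$). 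Your Greville-abscissa computation $\gamma_1(t)=\sum_i i\,B_{i,r}(t)=t-\tfrac r2$ and the explicit change of parameter $x=t-\tfrac r2$ repair exactly this gap, and show that the correct reading of the theorem is $\gamma_2(t)=\hat L(f)(\gamma_1(t))$ (with $t-2$ valid only for $r=4$). Your closing observation that the weight is \emph{forced} to be compactly supported --- and hence conflicts with the blanket hypothesis $W:(0,\infty)\to(0,\infty)$ of Section~\ref{sec:1}, which the paper never acknowledges even though its own $W$ vanishes off $[-2,2]$ --- is a worthwhile addition, though it is not needed for the existence claim itself. No gaps; your version is the more careful of the two.
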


\begin{proof} We will prove the theorem for the cubic splines, i.e. $r=4$. The proof for the different orders is similar.

From conditions (2) and (3), if we set $x_i=i$, then $t_i=x_i$, $i=0,\dots,n$ and hence $\gamma_1(t)=t$.


The b-spline curve of order $4$, defined using knots $\{t_i=i: i=0,\dots,n+r\}$, is
$$
\gamma_2(t) = \sum\limits_{i=0}^{n} B_{i,4}(t)f(x_i),\quad x\in[t_{r-1},t_{n+1}]\equiv[3,n+1].
$$

\begin{table}[t!]
{\scriptsize\begin{tabular}{llllll}
$\vdots$	       & $\vdots$	    &			  &		      &\\
$t_{i_0-2}<t_{i_0-1}$: & $B_{i_0-2,1}(t)=0$ & \vdots				      &\\
		       &		    & $B_{i_0-2,2}(t)=0$  & \vdots	      &\\
$t_{i_0-1}<t_{i_0}$:   & $B_{i_0-1,1}(t)=0$ & 		          & $B_{i_0-2,3}(t)>0$ & \vdots\\
		       & 		    & $B_{i_0-1,2}(t)>0$  &		      & $B_{i_0-2,4}(t)>0$\phantom{$\vdots$}\\
$t_{i_0}<t<t_{i_0+1}$: & $B_{i_0,1}(t)=1$   & 		       	  & $B_{i_0-1,3}(t)>0$&	\phantom{$\vdots$}		  \\
		       & 		    & $B_{i_0,2}(t)>0$    &		      & $B_{i_0-1,4}(t)>0$\phantom{$\vdots$}\\
$t_{i_0+1}<t_{i_0+2}$: & $B_{i_0+1,1}(t)=0$ & 		       	  & $B_{i_0,3}(t)>0$  & \vdots\\
		       & 		    & $B_{i_0+1,2}(t)=0$  & \vdots	      &\\
$t_{i_0+2}<t_{i_0+3}$: & $B_{i_0+2,1}(t)=0$ & \vdots		  & 		      &\\
$\vdots$	       & $\vdots$	    & 			  &		      &\\
\end{tabular}}\bigskip 

\caption{Cox-de Boor recursion algorithm}\label{Cox_de_Boor}
\end{table}

The following properties of b-spline basis functions $B_{i,j}(t)$ are well known:
\begin{enumerate}
\item[(BS-0)] By the direc calculation (see formulas \eqref{b-spline_eq_03}, \eqref{b-spline_eq_04}, and the schema illustrated in Table \ref{Cox_de_Boor}, $r=4$):
\begin{gather*}
B_{i,4}(t)=\begin{cases}
0,&\ \text{if}\ t<i,\\[4pt]
\dfrac{1}{6}\left( t-i \right) ^{3},&\ \text{if}\ i\leq t<i+1,\\[4pt]
-\dfrac{2}{3}\left( t-i-1 \right) ^{3}+\dfrac{1}{6}\left( t-i \right) ^{3},&\ \text{if}\ i+1\leq t<i+2,\\[4pt] 
\left( t-i-2 \right) ^{3}-\dfrac{2}{3}\left( t-i-1 \right) ^{3}&\\[4pt]
+\dfrac{1}{6}\left( t-i \right) ^{3},&\ \text{if}\ i+2\leq t<i+3,\\[4pt]
-\dfrac{2}{3}\left( t-i-3 \right) ^{3}+ \left( t-i-2 \right) ^{3}&\\[4pt]
-\dfrac{2}{3}\left( t-i-1 \right) ^{3}+\dfrac{1}{6}\left( t-i \right) ^{3},&\ \text{if}\ i+3\leq t<i+4,\\[4pt]
0,&\ \text{if}\ i+4\leq t.
\end{cases}
\end{gather*}
\item[(BS-1)] If $t\in(i,i+j)$, then $B_{i,j}(t)>0$.
\item[(BS-2)] If $t\in[0,i]\cup[i+j,n+j]$, then $B_{i,j}(t)=0$.
\item[(BS-3)] $\sum\limits_{i=0}^{n}B_{i,j}(t)=1$, for any $t\in(j-1,n+1)$.
\item[(BS-4)] $B_{i,j}(t)$ has $C^{j-2}$ continuity at each knot.
\item[(BS-5)] By the simple substitutions in the formulas in (BS-0): 
\begin{alignat*}{2}
B_{i,j}(t+i)=&B_{k,j}(t+k), && t\in(0,j),\\
B_{i-2,j}(t)=&B_{i,j}(t+2), \qquad && t\in(i-2,i+j-2). 
\end{alignat*}
\end{enumerate}

Let $t$ be a fixed point in the interval $(3,n+1)$ and $i_0$ be an integer such taht
$3\leq i_0<t<i_0+1\leq n+1$. Then
\begin{gather}\label{thm1_temp_7}
\gamma_2 (t) = \sum\limits_{i=0}^{n} B_{i,4}(t)f(i)= \sum\limits_{i=i_0-3}^{i_0} B_{i,4}(t)f(i).
\end{gather}
because if $i=1,\dots,i_0-4,i_0+1,\dots,n$, then $B_{i,4}(x)=0$, see (BS-1) and (BS-2). 

On the other hand, let us consider the moving least-squares problem for the given data $\{(i,f(i)):i=0,\dots,n+4\}$.  Let us set $l=1$, and 
\begin{gather*}
W(x)=\begin{cases}
0,&\ \text{if}\ x<-2,\\[4pt]
\dfrac{1}{6}(x+2)^3,&\ \text{if}\ -2\leq x<-1,\\[4pt]
-\dfrac{1}{2}x^3-x^2+\dfrac{2}{3},&\ \text{if}\ -1\leq x<0,\\[4pt] 
\dfrac{1}{2}x^3-x^2+\dfrac{2}{3},&\ \text{if}\ 0\leq x<1,\\[4pt]
-\dfrac{1}{6}(x-2)^3,&\ \text{if}\ 1\leq x<2,\\[4pt]
0,&\ \text{if}\ 2\leq x,
\end{cases}
\end{gather*}
see Figure \ref{fig:weight}.

\begin{figure}[t!]
\centerline{\includegraphics[scale=.3]{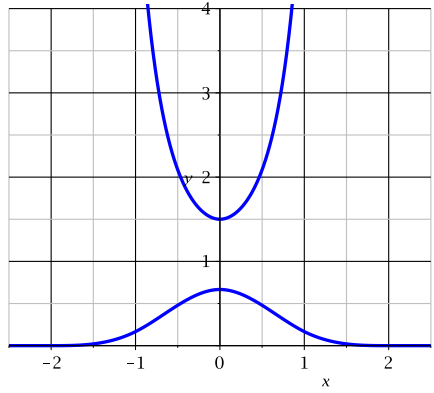}}
\caption{The graphics of $W(x),\ x\in(-2.5,2.5)$ and $w(x),\ x\in(-1,1)$\label{fig:weight}}
\end{figure}

Then for any $i=0,\dots,n$, we have (see also (BS-5)):
$$W(|x|)=W(x)=B_{i,4}(x+i+2)=B_{i-2,4}(x+i), \quad x\in[-2,2].$$
Hence $W(|x-i|)=B_{i-2,4}(x)$, $x\in [i-2,i+2]$.

The least-squares error is (see also conditions (H1): $p_1(x)=1$, so $p(x)=p$ has to be a constant) 
$$
\sum_{i=1}^{n+r}W(x-i)\left(p-f(i)\right)^2=
\sum_{i=i_0-1}^{i_0+2}W(x-i)\left(p-f(i)\right)^2,
$$
because $W(x-i)>0$, iff $i_0-1\leq i\leq i_0+2$.

It is not hard to compute 
\begin{align*}
E=& \begin{pmatrix}
1\\
1\\
1\\
1\\
\end{pmatrix},\ \boldsymbol c = \begin{pmatrix}
1
\end{pmatrix},\\
D= & 2 {\small\begin{pmatrix}
w(|x-(i_0-1)|) & 0 & 0 & 0\\
0 & w(|x-i_0|) & 0 & 0\\
0 & 0 & w(|x-(i_0+1)|) & 0\\
0 & 0 & 0 & w(|x-(i_0+2)|)\\
\end{pmatrix}},
\end{align*}
where $w(x)=\dfrac{1}{W(x)}$, and
\begin{align*}
E^tD^{-1}E =&\dfrac{1}{2}\left(\dfrac{1}{w(|x-(i_0-1))|}+\dfrac{1}{w(|x-i_0|)}+\dfrac{1}{w(|x-(i_0+1)|)}\right.\\
&\left.\qquad +\dfrac{1}{w(|x-(i_0+2)|)}\right)\\
=&\dfrac{1}{2}\left(B_{i_0-3,4}(x)+B_{i_0-2,4}(x)+B_{i_0-1,4}(x)+B_{i_0,4}(x)\right)\\
=&\dfrac{1}{2},\quad \text{because}\ x\in[i_0,i_0+1],\\
\boldsymbol a =& D^{-1}E\left(E^tD^{-1}E\right)^{-1} \boldsymbol c = \begin{pmatrix}
B_{i_0-3,4}(x)\\
B_{i_0-2,4}(x)\\
B_{i_0-1,4}(x)\\
B_{i_0,4}(x)
\end{pmatrix}.
\end{align*}

Hence, by Theorem \ref{thm_Levin1}, we have
$$
\hat L(f)(x)=\sum\limits_{i=1}^{4}a_if(x_i)=\sum\limits_{i=i_0-3}^{i_0}B_{i,4}(x) f(i) ,
$$
i.e. we received b-spline \eqref{thm1_temp_7}.
\end{proof}

\begin{remark}\label{sec:2:rem:1} Using the method, illustrated in the proof of Theorem \ref{sec_2_thm_1}, it is not difficult to generalize the result in whole interval:
$$
\gamma_2 (x)=\sum\limits_{i=0}^{n}W(x-i)f(i),\quad x\in[r-1,n].
$$

See Example \ref{sec:2:ex:1} in Section \ref{sec:2}.
\end{remark}

\begin{remark}\label{sec:2:rem:2} Using mentioned in Subsection 1.2, Levin's approach (i.e. working with weight-function $w(x)$, such that $w(x_i)=0$) in moving least-squares method, it is not difficult to receive interpolation. Let
$$
\widetilde W(x)=W(x)+\delta,\quad x\in{\mathbb R},
$$
where $\delta>0$. Then $\widetilde W(x)\geq\delta>0$, for any $x\in{\mathbb R}$ and
$\max\{\widetilde W(x):x\in{\mathbb R}\}=\widetilde W(0)=\frac{2}{3}+\delta$. Let moreover
$$
\widetilde w(x)=\dfrac{1}{W(x)+\delta}-\dfrac{3}{2+3\delta},\quad x\in{\mathbb R}.
$$
\begin{figure}[t!]
\centerline{\includegraphics[scale=.3]{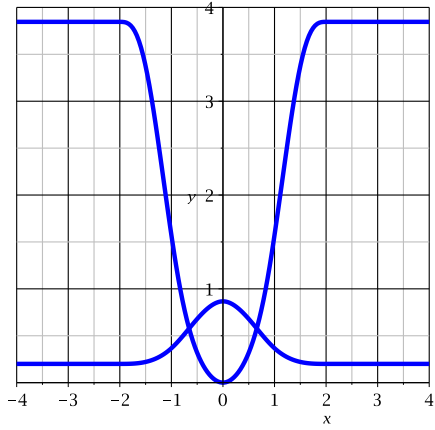}}
\caption{The graphics of $\widetilde W(x)$ and $\widetilde w(x)$, $x\in(-4,4)$\label{fig:weight:0}}
\end{figure}

Then: $\widetilde w(x)>0$, for any $x\in{\mathbb R}\setminus\{0\}$ and
$\min\{\widetilde w(x):x\in{\mathbb R}\}=\widetilde w(0)=0$, see Figure \ref{fig:weight:0}.

In this case the method used in the proof of Theorem \ref{thm_Levin1} produces interpolation.

See Example \ref{sec:2:ex:2} in Section \ref{sec:2}.
\end{remark}

\section{Some Examples. Case of Interpolation}\label{sec:2}

\begin{example}\label{sec:2:ex:1}
Consider the following example of control points:
\begin{multline*}
\Xi_0=\{(x_i,f(x_i)):x_i=i,\\ f(x)=e^{-x^2}+3e^{-(x-4)^2}+1.7e^{-(x-8)^2},\ i=0,\dots,10\}.
\end{multline*}

Here $n=10$, $r=4$, knots: $t_i=i$, $i=1,\dots,14$. The control points and cubic b-spline $\boldsymbol \gamma(x)$ are illustrated on Figure \ref{fig:data:1}. Here we used standart Maple expression {\tt BSplineCurve}, see \cite{maple}.
\begin{figure}[t!]
\centerline{\includegraphics[scale=.3]{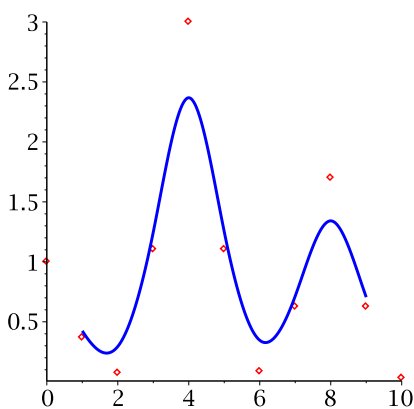}}
\caption{The plots of data-set $\Xi_0$ and cubic b-spline in $[1,9]$\label{fig:data:1}}
\end{figure}

Using Remark \ref{sec:2:rem:1} it is easy to construct the b-spline curve in whole interval $[2,10]$ -- see Figure \ref{fig:data:1:bspline}.
\begin{figure}[t!]
\centerline{\includegraphics[scale=.3]{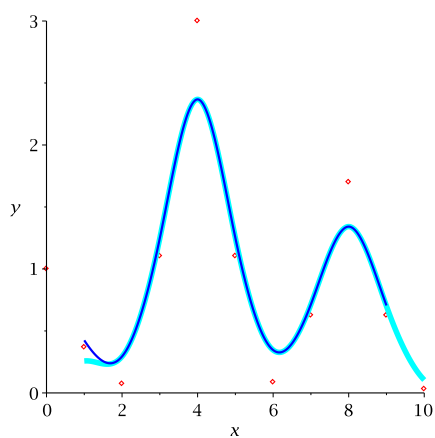}}
\caption{The plots of data-set $\Xi_0$ (red dots), cubic b-spline in $[1,9]$ (blue curve) and moving least square approximation in $[2,10]$ (cyan bold curve)\label{fig:data:1:bspline}}
\end{figure}

Let us apply Theorem \ref{sec_2_thm_1} in the interval $(5,6)$, for example. We have $W(x)=B_{5-2,4}(x+5+2)$ and moreover
$$
W(x)>0,\ i=4,5,6,7;\quad W(x)=0,\ i=0,1,2,3,8,9,10.
$$

Following the proof of theorem: let $w(s)=\dfrac{1}{W(s)}$.
 Let $x\in(5,6)$ be a fixed point, then:
\begin{align*}
E^tD^{-1}E=&\dfrac{1}{2}\left(\dfrac{1}{w(|x-4|)}+\dfrac{1}{w(|x-5|)}+\dfrac{1}{w(|x-6|)}+\dfrac{1}{w(|x-7|)}\right)\\=&\dfrac{1}{2},
\end{align*}
see (BS-3). Moreover
\begin{gather*}
\boldsymbol a = D^{-1}E\left(E^tD^{-1}E\right)^{-1} \boldsymbol c 
=  D^{-1}E\ 2
=\begin{pmatrix}
B_{2,4}(x)\\
B_{3,4}(x)\\
B_{4,4}(x)\\
B_{5,4}(x)\\
\end{pmatrix}.
\end{gather*}
So, we receive the classical b-spline formula in the interval $(5,6)$:
$$
\gamma_2(x)=\sum\limits_{i=2}^{5}W(x-i)f(i)=\sum\limits_{i=2}^{5}B_{i,4}(x)f(i).
$$

The b-splines in the intervals $[5,6]$, $[6,7]$, and $[7,8]$ are ploted on Figure \ref{fig:data:k=5,6,7}, respectively.
\begin{figure}[t!]
\centerline{
\includegraphics[scale=.2]{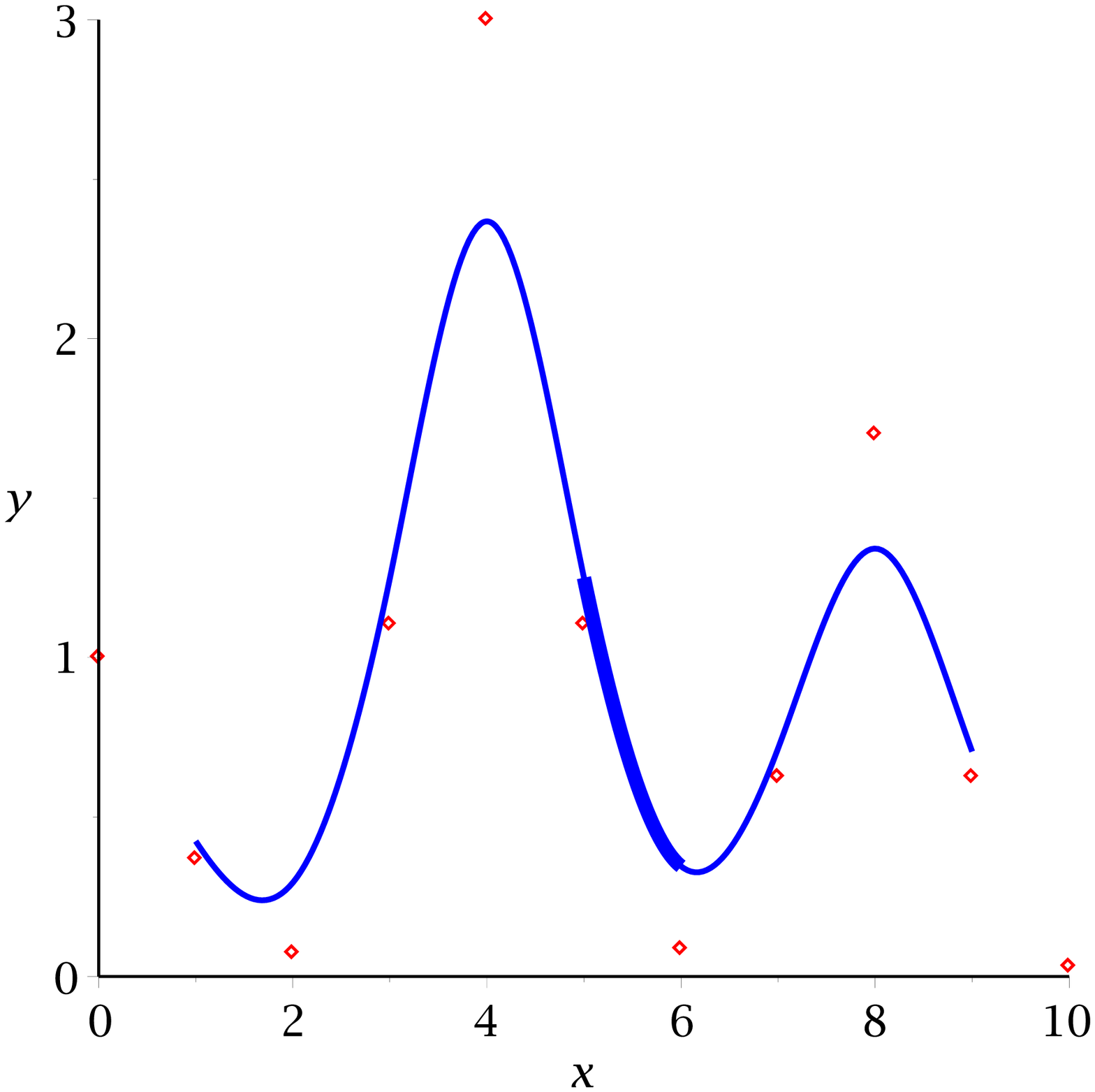}
\includegraphics[scale=.2]{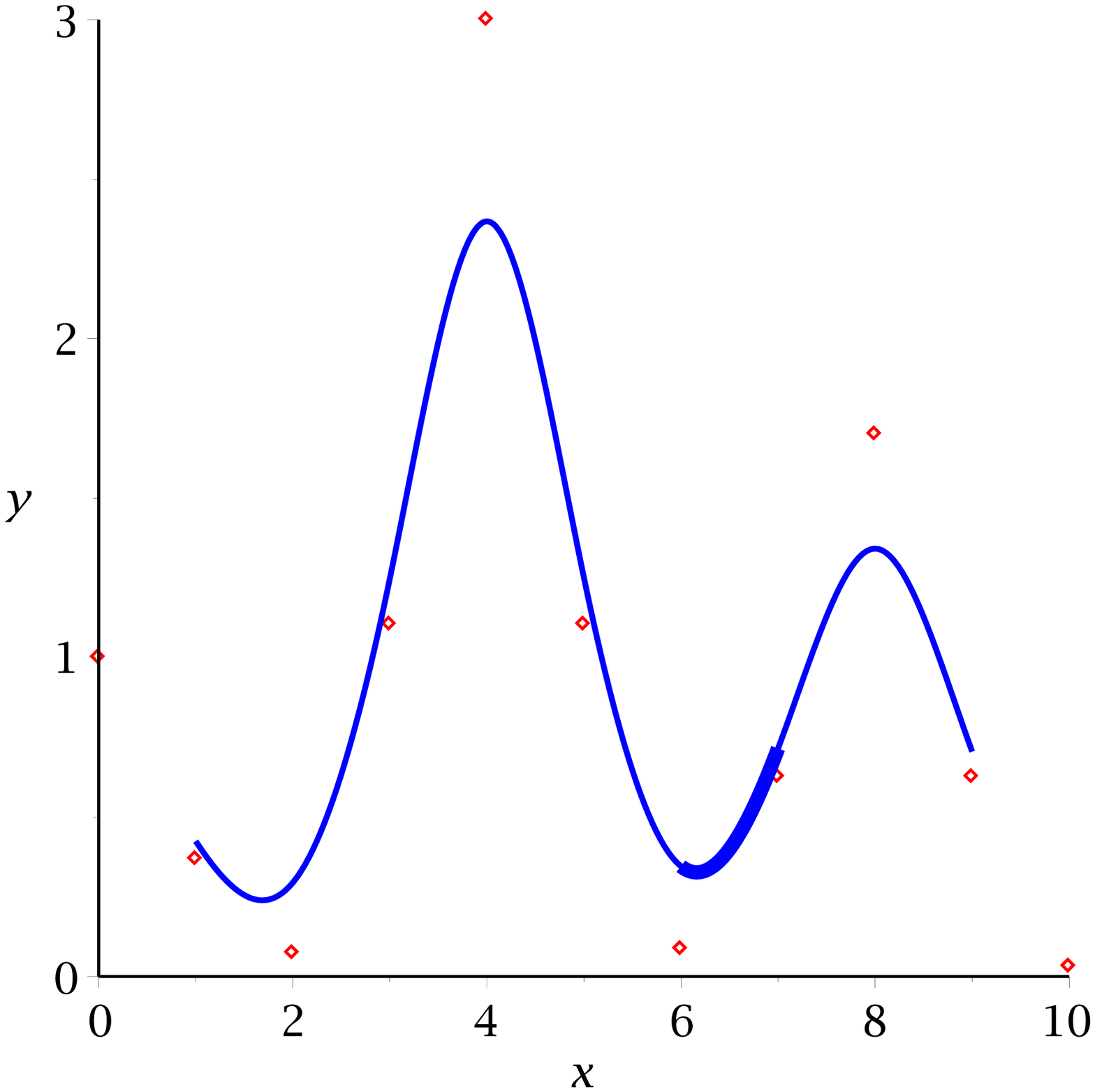}
\includegraphics[scale=.2]{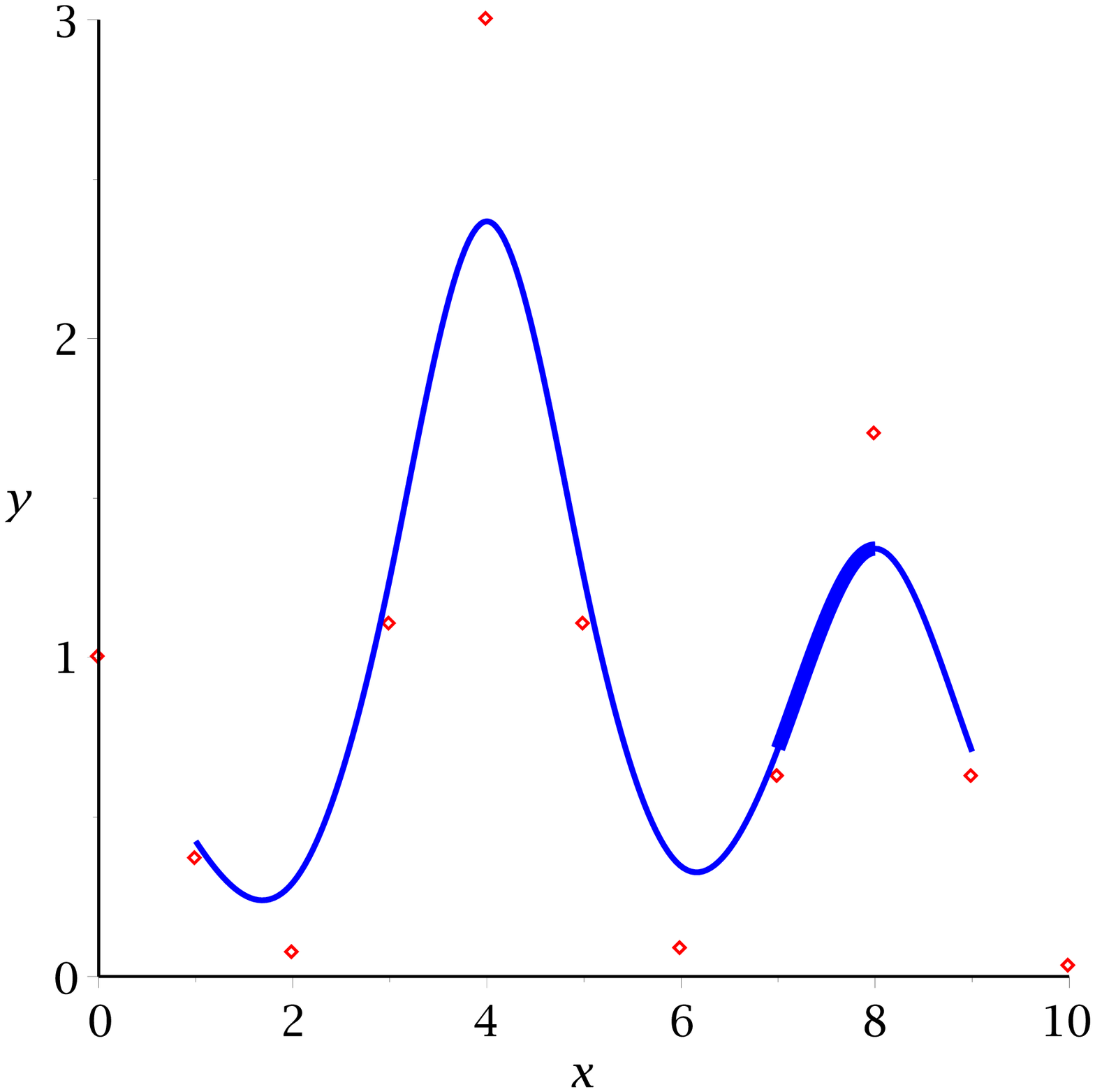}
}
\caption{The plots of b-spline in $[5,6]$, $[6,7]$, and $[7,8]$, respectively (blue bold curve)\label{fig:data:k=5,6,7}}
\end{figure}

\end{example}

\begin{example}\label{sec:2:ex:2}
To construct the interpolation in the interval $[x_0,x_{10}]$, following Remark \ref{sec:2:rem:2}, let us set $\delta=0.1$, for example. Then
$$
\widetilde w(x)=\dfrac{1}{W(x)+0.1}-\dfrac{3}{5},\quad x\in{\mathbb R}.
$$

Applying the moving least-squares method (i.e. applying Remark \ref{sec:2:rem:2} and Theorem \ref{thm_Levin1} at each point $x=l/100$, $l=1,\dots,1000$) we received the function presented in Figure \ref{interpolation:1}.
\begin{figure}[t!]
\centerline{\includegraphics[scale=.3]{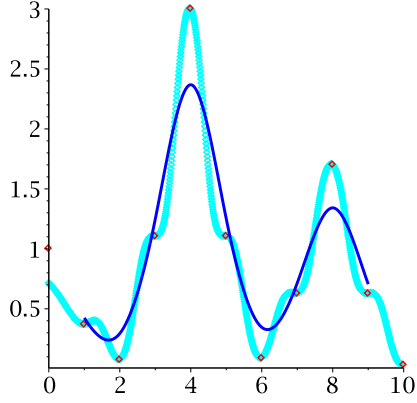}}
\caption{The plots of data (red dots), b-spline (blue), and interpolation (cyan, bold)\label{interpolation:1}}
\end{figure}

\end{example}

\section{B-Spline Surfaces}

Let $\{\boldsymbol p_{ij}\in{\mathbb R}^{3}: i,j=0,\dots,n\}$ be a set of $(n+1)^2$ control points.

Let $r$ be an integer, $1\leq r \leq n+1$ (the order of spline).

We will use again the uniform knots, without multiplicity: $u_i=v_i=i$, $i=0,\dots,n+r$. Then, the corresponding b-spline surface of order $r$ is given by
$$
\boldsymbol r(u,v)=\sum\limits_{i=0}^{n}\sum\limits_{j=0}^{n} B_{i,r}(u)B_{j,r}(v)\boldsymbol p_{ij} .
$$

Arguments similar to the proof of Theorem \ref{sec_2_thm_1}, yield to the following result.
\begin{theorem}\label{sec_4_thm_1} Let:
\begin{enumerate}
\item $d=2$, $n,r\in{\mathbb Z_+}$, $r\leq n+1$, $f:[0,n+r]\times[0,n+r]\to\mathbb R$ be a continuous function.
\item $\boldsymbol p_{ij}=(i,j,f(i,j))$, $i=0,\dots,n+r$.
\item Let $\boldsymbol \gamma (u,v)=\left(\begin{smallmatrix}
u\\v\\\gamma_3(u,v)
\end{smallmatrix}\right)$ be the b-spline of order $r$ and knots $\{(u_i,v_j)=(i,j): i,j=0,\dots,n+r\}$.
\end{enumerate}

Then there exists a weight function $W(x,y)$, such that
$$
\gamma_3 (x,y) = \hat L(f)(x,y),\quad (x,y)\in[r-1,n]\times[r-1,n].
$$
\end{theorem}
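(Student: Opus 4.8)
The plan is to mimic the proof of Theorem \ref{sec_2_thm_1} and reduce the bivariate statement to the univariate one via the tensor-product structure of the b-spline surface. First I would record the tensor-product expansion
\[
\gamma_3(u,v)=\sum_{i=0}^{n}\sum_{j=0}^{n}B_{i,r}(u)B_{j,r}(v)\,f(i,j),
\]
and localise: fixing $(x,y)$ with $i_0\le x<i_0+1$ and $j_0\le y<j_0+1$ (so $r-1\le i_0,j_0$), properties (BS-1) and (BS-2) collapse each sum to the $r$ indices $i_0-r+1,\dots,i_0$ and $j_0-r+1,\dots,j_0$, exactly as in \eqref{thm1_temp_7}. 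Thus $\gamma_3(x,y)$ is a finite bilinear combination of the nearby data values with coefficients $B_{i,r}(x)B_{j,r}(y)$.

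Next I would set up the moving least-squares problem in $d=2$ with $l=1$ (so $\mathcal P_l$ consists of constants, $p_1\equiv 1$, and $\boldsymbol c=(1)$), and choose the weight function to be the product $W(x,y)=W_0(x)W_0(y)$, where $W_0$ is the univariate weight built in the proof of Theorem \ref{sec_2_thm_1}, i.e. the one satisfying $W_0(|x-i|)=B_{i-2,r}(x)$ on the appropriate interval. Then $w(x,y)=1/(W_0(x)W_0(y))$ and, since $W(x-i,y-j)>0$ only for the $r^2$ pairs $(i,j)$ with $i_0-1\le i\le i_0+2$ and $j_0-1\le j\le j_0+2$ (in the cubic case), the matrices $E$, $D$, $\boldsymbol c$ take the same block form as before: $E$ is the all-ones column vector of length $r^2$. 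The key computation is
\[
E^tD^{-1}E=\tfrac12\sum_{i,j}\frac{1}{w(|x-i|,|y-j|)}=\tfrac12\Bigl(\sum_i B_{i-2,r}(x)\Bigr)\Bigl(\sum_j B_{j-2,r}(y)\Bigr)=\tfrac12,
\]
using property (BS-3) twice. Hence $\boldsymbol a=D^{-1}E\,(E^tD^{-1}E)^{-1}\boldsymbol c$ has entries $B_{i-2,r}(x)B_{j-2,r}(y)$, and Theorem \ref{thm_Levin1} gives $\hat L(f)(x,y)=\sum_{i,j}B_{i,r}(x)B_{j,r}(y)f(i,j)=\gamma_3(x,y)$ on the localised cell; patching over all cells in $[r-1,n]\times[r-1,n]$ yields the claim. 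The statement $\gamma_1(u,v)=u$, $\gamma_2(u,v)=v$ follows exactly as in Theorem \ref{sec_2_thm_1}, since the first two coordinates of $\boldsymbol p_{ij}$ are $i$ and $j$ and $\sum_i B_{i,r}(u)i=u$, $\sum_j B_{j,r}(v)j=v$ by the partition-of-unity and linear-precision properties.

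The main obstacle, such as it is, lies in checking hypothesis (H1.3), i.e. that $\operatorname{rank}(E^t)=l=1$: with $l=1$ and the all-ones vector this is immediate, but one must be careful that the relevant sample points are exactly those $r^2$ grid points with nonzero weight, so that $m=r^2\ge 1=l$ and (H1.1)–(H1.4) all hold; (H1.4) requires $w$ smooth, which needs $W_0$ to be bounded away from $0$ on the active interval — true since each $B_{i-2,r}$ is strictly positive in the interior of its support and the four relevant basis functions overlap there. One should also note, as in Remark \ref{sec:2:rem:1}, that working with the product weight $W(x,y)$ directly recovers the formula $\gamma_3(x,y)=\sum_{i,j}W(x-i,y-j)f(i,j)$ without inverting any nontrivial matrix, which is really the content of the theorem; the moving-least-squares packaging via Theorem \ref{thm_Levin1} is then just a reinterpretation. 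Because the argument is entirely separable in the two variables, no genuinely new estimate is needed beyond the univariate case.
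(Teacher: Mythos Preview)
Your proposal is correct and matches the paper's approach: the paper gives no detailed proof of this theorem, stating only that ``arguments similar to the proof of Theorem \ref{sec_2_thm_1}'' yield the result, and the example immediately following it uses exactly the product weight $W(x-i)W(y-j)$ that you construct. Your tensor-product reduction, the localisation via (BS-1)--(BS-2), the choice $l=1$, and the computation of $E^tD^{-1}E=\tfrac12$ via a double application of (BS-3) are precisely the intended generalisation.
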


\begin{example} As an example, consider the following data
\begin{multline*}
\Xi_0=\left\{(x_i,y_j,f(x_i)):x_i=i,\ y_j=j,\right.\\ 
\left.f(x_i,y_j)=e^{-x^2}+3e^{-(y-1)^2}+e^{-(x-6)^2-(y-6)^2},\ i=0,\dots,10\right\}.
\end{multline*}

\begin{figure}[t!]
\centerline{\includegraphics[scale=.5]{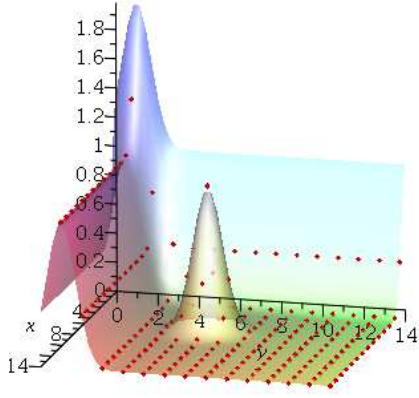}}
\caption{Set of knots and plot of function $f(x,y)$, $x,y\in[0,14]$\label{3d:1}}
\end{figure}
\begin{figure}[t!]
\centerline{\includegraphics[scale=.5]{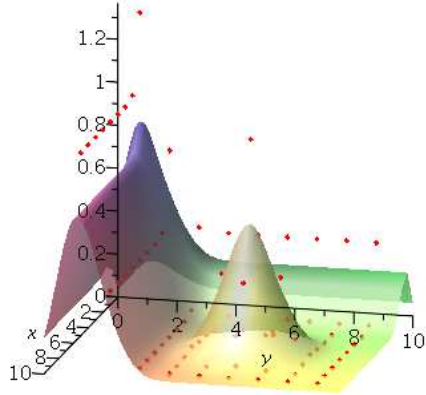}}
\caption{B-spline surface $\gamma_3(u,v)$ in $[1,9]\times[1,9]$\label{3d:2}}
\end{figure}
\begin{figure}[t!]
\centerline{
\includegraphics[scale=.3]{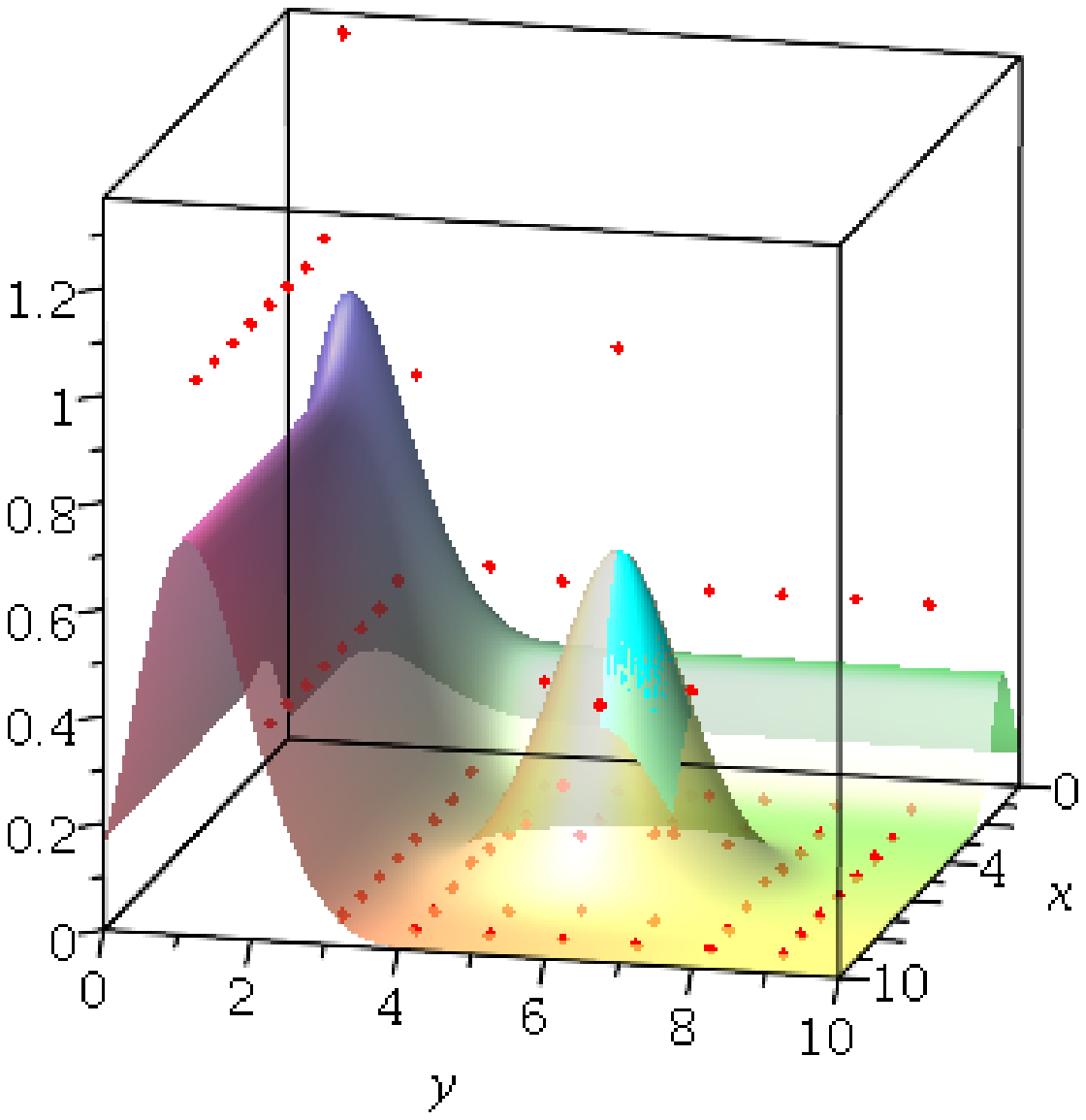}
\includegraphics[scale=.3]{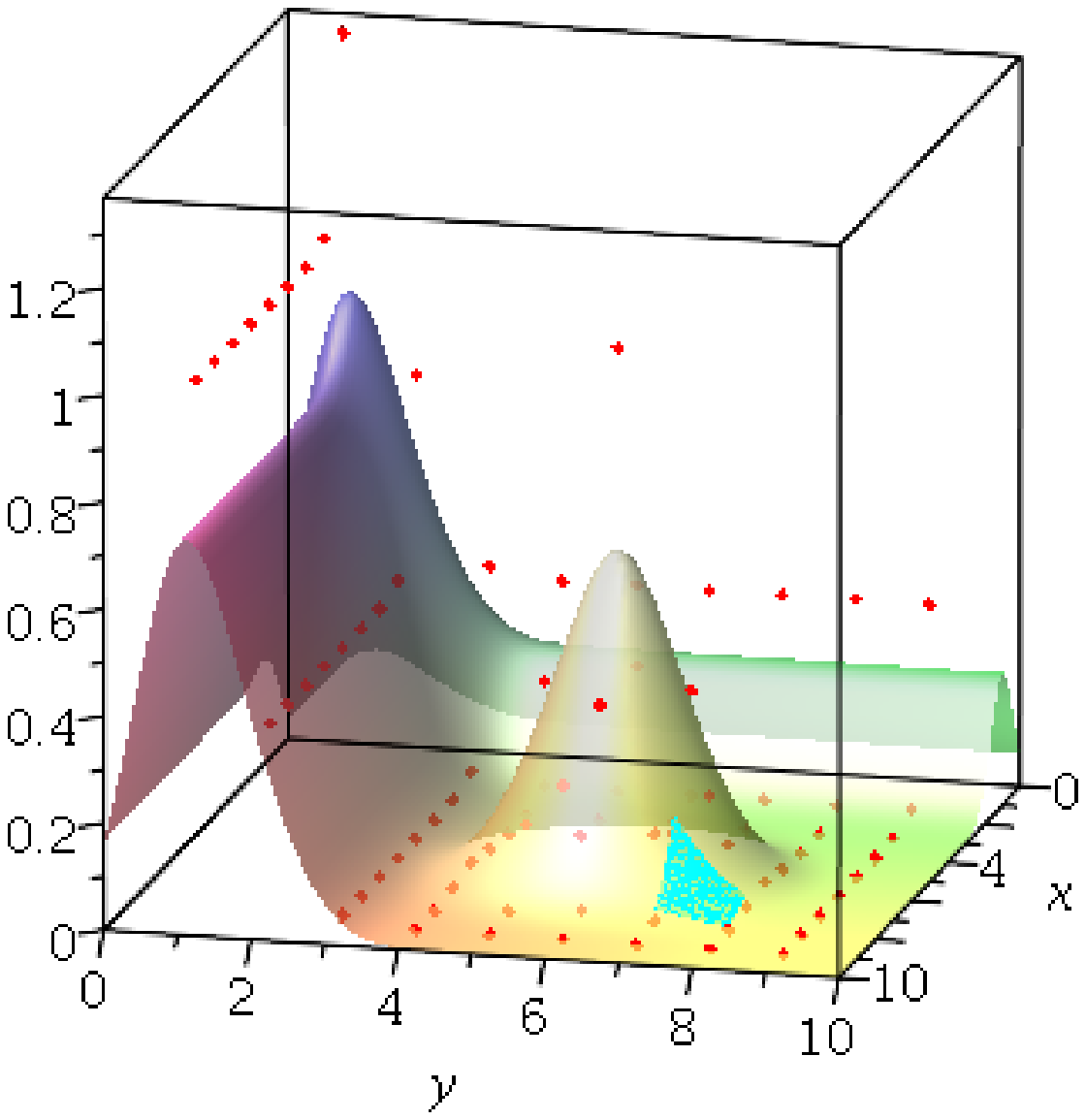}
\includegraphics[scale=.3]{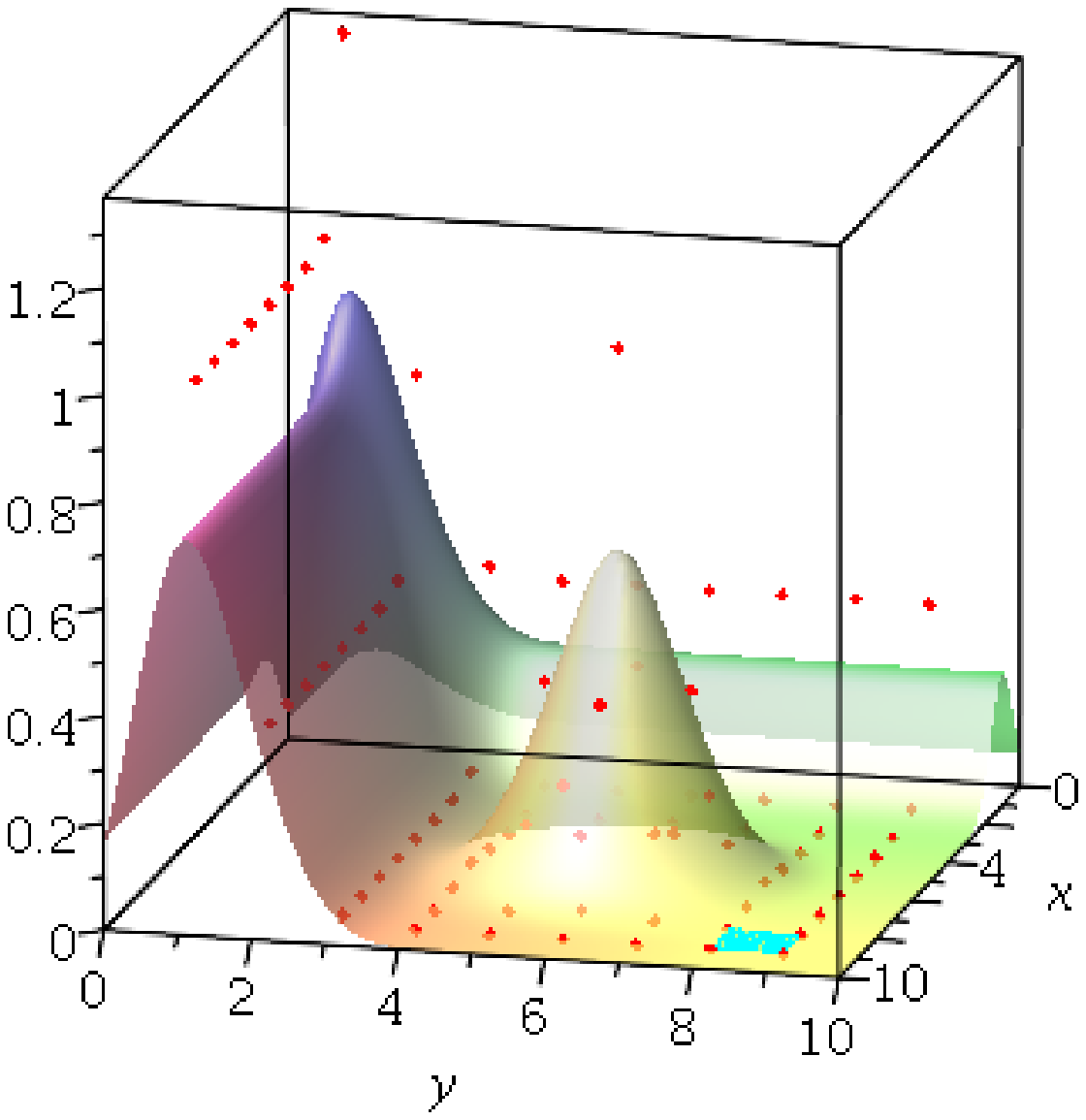}
}
\centerline{
\includegraphics[scale=.3]{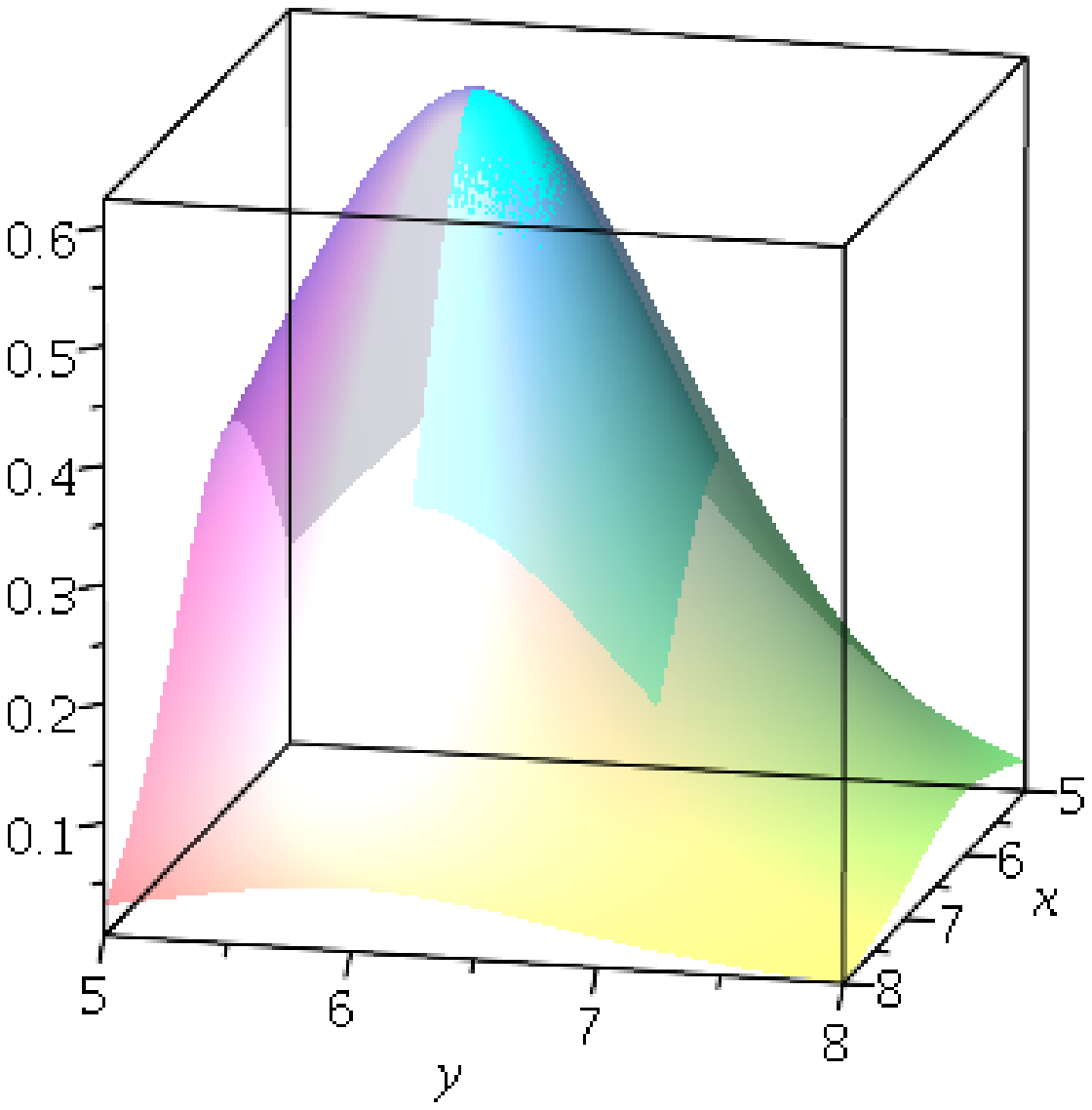}
\includegraphics[scale=.3]{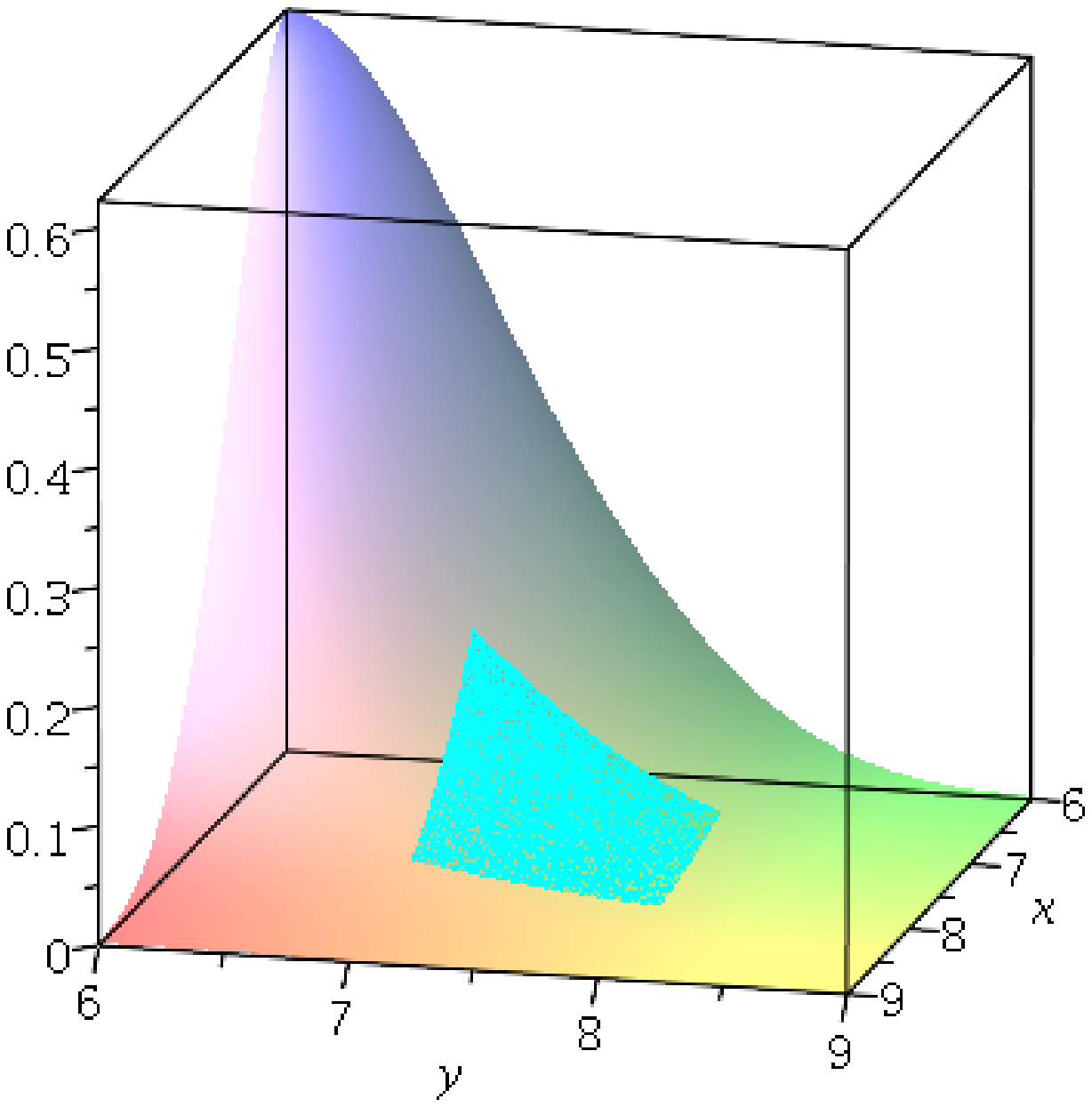}
\includegraphics[scale=.3]{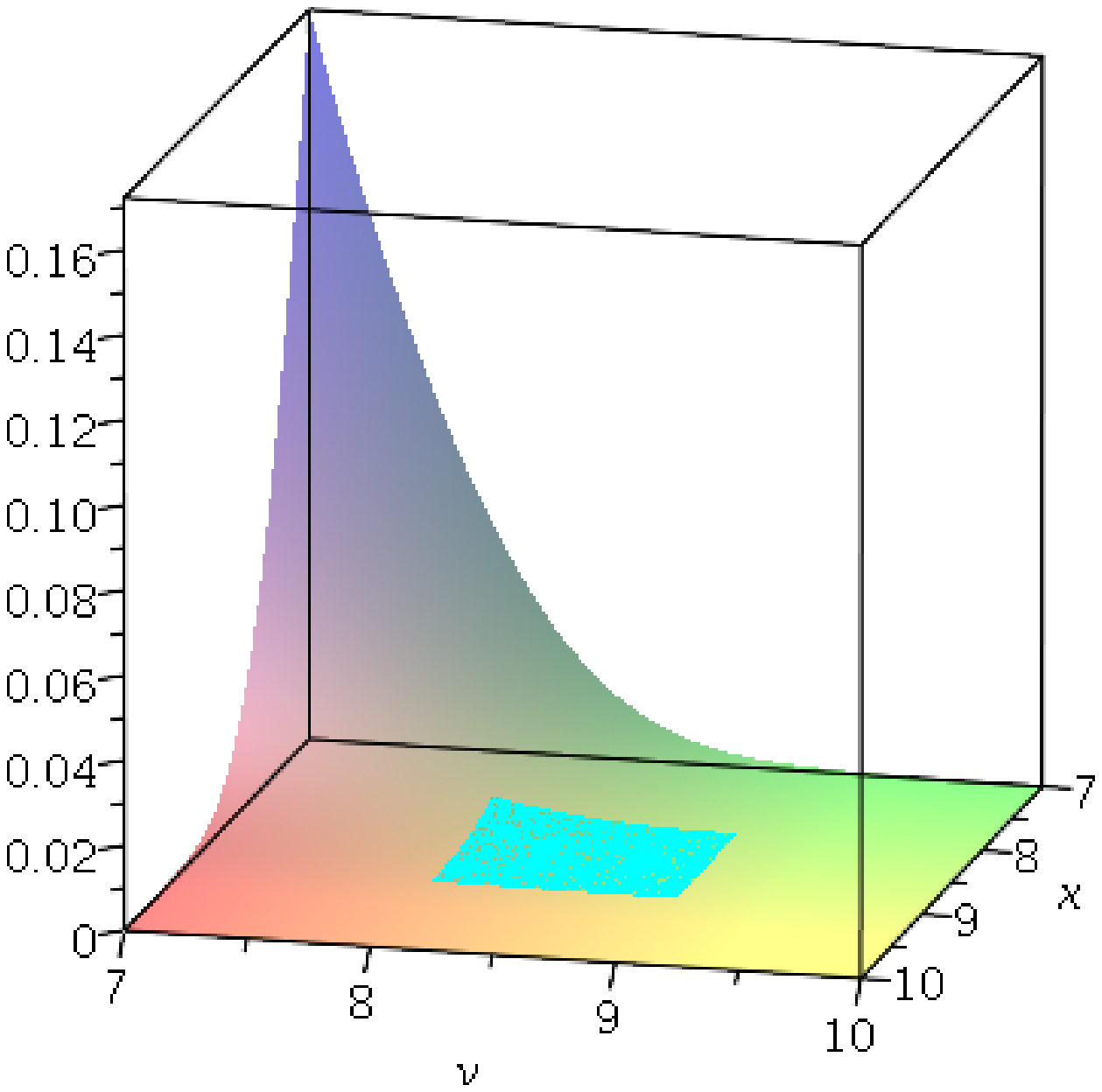}
}
\caption{B-splines in $[i,i+1]\times[i,i+1]$, $i=6,7,8$\label{3d:3} and corresponding zoomed segments}
\end{figure}
The set of control points $\boldsymbol p_{ij}=\left(\begin{smallmatrix}
x_i\\y_j\\f(x_i,y_j)\gamma_2(t)
\end{smallmatrix}\right)$ and function $f(x,y)$ are plotted in Figure \ref{3d:1}.

Using the formula
$$
\hat L(f)(x,y)=\sum\limits_{i=0}^{n}\sum\limits_{j=0}^{n}\boldsymbol p_{ij} W(x-i)W(x-j).
$$
we receive the plot of b-spline surface $\gamma_3(u,v)$ in $[1,9]\times[1,9]$, see Figure \ref{3d:2}.

If we need the plot of b-spline only in the segment $[i_0,i_0+1]\times[j_0,j_0+1]$, then
$$
\hat L(f)(x,y)=\sum\limits_{i=i_0-3}^{i_0}\sum\limits_{j=j_0-3}^{j_0}\boldsymbol p_{ij} W(x-i)W(y-j),
$$
see Figure \ref{3d:3}.
\end{example}

\end{document}